\numberwithin{equation}{section}
\newtheorem{Thm}[equation]{Theorem}
\newtheorem{Prop}[equation]{Proposition}
\newtheorem{Lem}[equation]{Lemma}
\newtheorem{Conj}[equation]{Conjecture}
\theoremstyle{definition}
\newtheorem{Rmk}[equation]{Remark}
\newcommand{\Q}{\mathbb{Q}}
\begin{document}
\title[Extreme residues of Dedekind zeta functions]{Extreme residues of Dedekind zeta functions}
\author[Peter Cho]{Peter J. Cho}
\address{Department of Mathematical Sciences, Ulsan National Institute of Science and Technology, Ulsan, Korea}
\email{petercho@unist.ac.kr}
\author[Henry Kim]{Henry H. Kim$^{\star}$}
\thanks{$^{\star}$ partially supported by an NSERC grant.}
\address{Department of Mathematics, University of Toronto, ON M5S 2E4, CANADA \\
and Korea Institute for Advanced Study, Seoul, Korea}
\email{henrykim@math.toronto.edu}

\subjclass[2010]{Primary 11R42, Secondary 11M41}

\keywords{Dedekind zeta functions; Artin $L$-functions; extreme values}
\begin{abstract} In a family of $S_{d+1}$-fields ($d=2,3,4$), we obtain the true upper and lower bound of the residues of Dedekind zeta functions except for a density zero set. For $S_5$-fields, we need to assume the strong Artin conjecture. We also show that there exists an infinite family of number fields with the upper and lower bound, resp.
\end{abstract}

\maketitle

\section{Introduction}
For a quadratic extension $K=\mathbb{Q}(\sqrt{D})$ with a fundamental discriminant $D$, $Res_{s=1} \zeta_K(s)=L(1,\chi_D)$, where
$\chi_D=\left( \frac{D}{\cdot} \right)$ is the quadratic character. In this case, Littlewood \cite{Lit} obtained the bound
$$
\left(\frac 12+o(1)\right)\frac{\zeta(2)}{e^\gamma \log\log |D|} \leq L(1,\chi_D) \leq (2+o(1))e^\gamma\log\log |D|
$$
under GRH, where $\gamma$ is the Euler-Mascheroni constant. Under the same hypothesis, he also constructed an infinite family of quadratic fields with $L(1,\chi_D) \geq (1+o(1))e^\gamma \log\log |D|$ and an infinite family of quadratic fields with 
$L(1,\chi_D) \leq (1+o(1))\dfrac{\zeta(2)}{e^\gamma \log\log |D|}$. Later, Chowla \cite{Chowla} established the latter omega result unconditionally. It has been conjectured that the true upper and lower bounds are $(1+o(1))e^\gamma \log\log |D|$ and $(1+o(1))\dfrac{\zeta(2)}{e^\gamma \log\log |D|}$, resp. In \cite{MV}, Montgomery and Vaughan considered the distribution of $L(1,\chi_D)$ via random variables which take $\pm 1$ with equal probability. They proposed three conjectures which support the expected bounds. In \cite{GS}, some of the conjectures were proved by Granville and Soundararajan.  

For a number field $K$ of degree $d+1$, the lower bound and the upper bound of $Res_{s=1} \zeta_K(s)$ under GRH and the strong Artin conjecture for $\zeta_K(s)/\zeta(s)$ are
\begin{equation} \label{GRH bound}
 \left(\frac 12+o(1)\right)\frac{\zeta(d+1)}{e^\gamma \log\log |D_K|}\leq Res_{s=1}\zeta_K(s) \leq (2+o(1))^d (e^{\gamma} \log\log|D_K|)^{d},
\end{equation}
where $D_K$ is the discriminant of a number field $K$. The proof of $(\ref{GRH bound})$ is given in Section 3 since at least the upper bound is well-known but it is hard to find its proof in the literature. 

As in the quadratic extension case, we may conjecture that $(1+o(1)) (e^{\gamma}\log\log|D_K|)^{d}$ and $(1+o(1))\dfrac{\zeta(d+1)}{e^\gamma \log\log |D_K|}$ are the true upper and lower bound, resp. In this paper, we show that it is the case except for a density zero set in a family of number fields. A number field $K$ of degree $d+1$ is called a $S_{d+1}$-field if its Galois closure over $\mathbb Q$ is an $S_{d+1}$ Galois extension.  For a $S_{d+1}$-field $K$, we have a decomposition of $\zeta_K(s)$:
$$
\zeta_K(s)=\zeta(s)L(s,\rho,\widehat{K}/Q),
$$
where $\widehat{K}$ is the Galois closure of $K$ over $\Q$ and $\rho$ is the standard representation of $S_{d+1}$. For simplicity, we denote $L(s,\rho,\widehat{K}/Q)$ by $L(s,\rho)$. Hence $Res_{s=1} \zeta_K(s)=L(1,\rho)$. Then, our first main theorem is 

\begin{Thm} \label{true bound} Let $L(X)$ be a set of $S_{d+1}$-fields with $X/2 \leq |D_K|\leq X$, $d+1=3,4$ and $5$. For 
$S_5$-fields, we assume the strong Artin conjecture for $L(s,\rho)$. Then, except for $O(X e^{-c' \frac {\log X}{\log\log X}\log\log\log X})$ $L$-functions for some constant $c'>0$,
\begin{eqnarray*}
 (1+o(1))\frac{\zeta(d+1)}{e^{\gamma} \log\log|D_K| }    \leq L(1,\rho) \leq (1+o(1))(e^{\gamma}\log\log |D_K|)^d. 
\end{eqnarray*}
where $o(1)=O\left(\frac{1}{(\log\log |D_K|)^{1/2}} \right)$.
\end{Thm}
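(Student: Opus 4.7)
The plan is to reduce the bound on $L(1,\rho)$ to a bound on the short Euler product $\sum_{p\le y}\log L_p(1,\rho)$ with $\log y\sim\log\log|D_K|$, and then to exploit the fact that each local factor lies in a narrow, explicit range in order to recover precisely the two extreme values of the theorem. For $p$ unramified in $\widehat K/\Q$, the factor $L_p(1,\rho)=\det(I-\rho(\operatorname{Frob}_p)/p)^{-1}$ depends only on the cycle type of $\operatorname{Frob}_p\in S_{d+1}$. A direct evaluation on the conjugacy classes shows that $L_p(1,\rho)$ attains its maximum, $(1-1/p)^{-d}$, at the identity (i.e.\ $p$ splits completely) and its minimum, $(1-1/p)/(1-1/p^{d+1})$, at a full $(d+1)$-cycle (i.e.\ $p$ is inert). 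By Mertens' theorem, these two extreme products over $p\le y$ with $\log y\sim\log\log|D_K|$ equal $(1+o(1))(e^\gamma\log\log|D_K|)^d$ and $(1+o(1))\zeta(d+1)(e^\gamma\log\log|D_K|)^{-1}$, respectively, which match the conjectural bounds in the theorem.

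The analytic core is the approximation
\begin{equation*}
\log L(1,\rho)=\sum_{p\le y}\log L_p(1,\rho)+O\!\bigl((\log\log|D_K|)^{-1/2}\bigr),
\end{equation*}
which I would establish by a smoothed Perron/explicit-formula argument under the assumption that $L(s,\rho)$ admits a zero-free region of the form $\{\Re s\ge 1-\eta,\ |\Im s|\le T\}$ with $\eta,T$ appropriately coupled to $y$. Combining this approximation with the pointwise inequalities $\log\bigl[(1-1/p)/(1-1/p^{d+1})\bigr]\le\log L_p(1,\rho)\le -d\log(1-1/p)$ and then summing via Mertens' theorem yields both inequalities of the theorem with error $O((\log\log|D_K|)^{-1/2})$.

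The only genuinely hard step, and the main obstacle, is to show that the zero-free region above holds for all but $O\bigl(X\exp(-c'\log X\log\log\log X/\log\log X)\bigr)$ of the fields $K\in L(X)$. This is a zero-density statement for the family of Artin $L$-functions $\{L(s,\rho):K\in L(X)\}$. Under the strong Artin conjecture (known for $d=2,3$ and assumed for $d=4$), each $L(s,\rho)$ is automorphic, so a large-sieve/higher-moment argument is available; to evaluate the relevant averages one invokes the Davenport--Heilbronn ($d=2$) and Bhargava ($d=3,4$) counting theorems for $S_{d+1}$-fields, which give both the correct count $|L(X)|\asymp X$ and the expected equidistribution of the cycle type of $\operatorname{Frob}_p$ at small primes. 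Optimizing parameters in this moment estimate should then deliver the sharp exceptional-set bound stated in the theorem, after which the pointwise step above finishes the proof.
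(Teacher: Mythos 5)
There is a genuine gap, and it sits exactly at the step you dispose of most quickly. You claim that the approximation $\log L(1,\rho)=\sum_{p\le y}\log L_p(1,\rho)+O\bigl((\log\log|D_K|)^{-1/2}\bigr)$ with $\log y\sim\log\log|D_K|$ (so $y$ of size about $\log X$) follows from a Perron/explicit-formula argument once one has a zero-free region $\{\Re s\ge 1-\eta,\ |\Im s|\le T\}$. It does not: shifting the contour produces an error of size roughly $y^{-\eta}\log N(\log y)^2$, so one needs $y^{\eta}\gg(\log N)^{2+o(1)}$, i.e.\ the truncation point must be at least $(\log N)^{2/\eta}$; even under GRH (where $\eta$ can be taken as close to $1/2$ as one likes) the best attainable truncation is $(\log N)^{2+\epsilon}$. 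Feeding a product over $p<(\log N)^{2+\epsilon}$ into the pointwise local bounds and Mertens gives only $(2+o(1))^d(e^\gamma\log\log|D_K|)^d$ above and $(\tfrac12+o(1))\zeta(d+1)/(e^\gamma\log\log|D_K|)$ below --- precisely the weaker ``GRH bound'' (1.1) of the paper, not the theorem. So your plan, as written, cannot recover the constant $1+o(1)$, and no strengthening of the zero-free-region input (short of absurd regions with $\eta>1$) repairs this.

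The missing idea is the passage from the product over $p<x=(\log X)^{\beta}$ to the product over $p\le y=c_1\log X$, which is not a pointwise statement but a statement about almost all fields in the family: one must show that $\bigl|\sum_{y<p<x}a_\rho(p)/p\bigr|\le(\log\log X)^{-1/2}$ outside an exceptional set. The paper does this by bounding the $2r$-th moment $\sum_{\rho\in L(X)}\bigl(\sum_{y<p<x}a_\rho(p)/p\bigr)^{2r}$ with $r\asymp\log X/\log\log X$ (Propositions 4.1 and 4.2), and this is where the counting of $S_{d+1}$-fields with prescribed local conditions at primes up to $c\log X$ (Taniguchi--Thorne for $S_3$, Cho--Kim for $S_4,S_5$) enters, together with orthogonality of the character of $\rho$ on $S_{d+1}$ to kill the main term when some prime occurs to the first power. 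That moment estimate is the source of the exceptional-set size $O\bigl(Xe^{-c'\frac{\log X}{\log\log X}\log\log\log X}\bigr)$ in the theorem; by contrast, the zero-free-region step, handled via the Kowalski--Michel zero-density theorem (which is why the strong Artin conjecture is needed), only removes a power-saving number $O\bigl((\log X)^{\beta B}X^{(\frac{5d}{2}+1)\frac{1-\alpha}{2\alpha-1}}\bigr)$ of fields. Your proposal attributes the exceptional set entirely to the zero-density statement and uses the counting theorems only to support equidistribution heuristics there; in fact the counting-with-local-conditions input must be deployed in a separate, quantitative moment computation over the family, and without it the argument stops at the factor-$2^{d}$ barrier.
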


Furthermore, under the same hypothesis, we construct an infinite family of $S_{d+1}$-fields  with extreme residue values.

\begin{Thm}\label{main} Let $d+1=3,4$, and $5$. For $d+1=5$, we assume the strong Artin conjecture. Then 
\begin{enumerate}
\item The number of $S_{d+1}$-fields $K$ of signature $(r_1,r_2)$ with $\frac X2\leq |D_K| \leq X$ for which
\begin{eqnarray*}
L(1,\rho)&=&\prod_{p\leq y} (1-p^{-1})^{-d}\left(1+O\left(\frac 1{(\log\log |D_K|)^{1/2}}\right)\right)\\
&=&(e^{\gamma}\log\log |D_K|)^d \left( 1+O \left(\frac{1}{(\log\log |D_K|)^{1/2}} \right) \right)
\end{eqnarray*}
is $ \geq A(r_2)X\exp\left(-\log |S_{d+1}|\cdot \frac{\log X}{\log\log X}-\log\log\log X\right)$.

\item The number of $S_{d+1}$-fields $K$ of signature $(r_1,r_2)$ with $\frac X2\leq |D_K| \leq X$ for which
\begin{eqnarray*}
L(1,\rho)=\frac{\zeta(d+1)}{e^\gamma \log\log|D_K|}\left(1+ O\left(\frac 1{(\log\log X)^{1/2}}\right)\right)
\end{eqnarray*}
is $\geq A(r_2)X\exp\left(-\log \frac{|S_{d+1}|}{(d+1)}\cdot \frac{\log X}{\log\log X}-\log\log\log X\right)$.
\end{enumerate}
\end{Thm}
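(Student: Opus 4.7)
The argument mirrors the Chowla--Littlewood construction in the quadratic case. The idea is to construct many $S_{d+1}$-fields $K$ in which every unramified rational prime $p\le y$, for a carefully calibrated threshold $y$, has a prescribed splitting chosen so that the partial Euler product $\prod_{p\le y}L_p(1,\rho)$ already realizes the extreme value on the right-hand side of the asymptotic.

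First I would choose $y=y(X)$ so that $\pi(y)=(1+o(1))\frac{\log X}{\log\log X}$, i.e.\ $y\asymp\log X$. Then $\log y\sim\log\log X$, and by Mertens's theorem
\[
\prod_{p\le y}(1-p^{-1})^{-d}\sim (e^{\gamma}\log\log X)^{d},\qquad \prod_{p\le y}\frac{1-p^{-1}}{1-p^{-(d+1)}}\sim \frac{\zeta(d+1)}{e^{\gamma}\log\log X},
\]
so the two partial products already hit the upper and lower extreme values. At each unramified $p\le y$ I would therefore impose: for part (1), $\mathrm{Frob}_p=1$ in $S_{d+1}$ (i.e.\ $p$ splits completely in $K$), forcing $L_p(1,\rho)=(1-p^{-1})^{-d}$; for part (2), $\mathrm{Frob}_p$ is a $(d+1)$-cycle (i.e.\ $p$ is inert in $K$), forcing $L_p(1,\rho)=(1-p^{-1})/(1-p^{-(d+1)})$.

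The next step is to count $S_{d+1}$-fields of signature $(r_1,r_2)$ with $|D_K|\in[X/2,X]$ satisfying these local conditions simultaneously. For $d=2$ this is the uniform Davenport--Heilbronn theorem of Belabas--Bhargava--Pomerance; for $d=3,4$ it follows from Bhargava's asymptotics together with sieves in the style of Bhargava--Shankar--Tsimerman, uniform in the set of imposed local conditions. The resulting count is of shape
\[
\gg A(r_2)\, X\,\prod_{p\le y}\delta_p,
\]
where $\delta_p$ is the local density of the prescribed splitting type at $p$. For the split-completely condition one has $\delta_p=|S_{d+1}|^{-1}(1+O(1/p))$, producing $\exp(-\log|S_{d+1}|\cdot\log X/\log\log X)$ after multiplying over $p\le y$; for the inert condition the analogous local calculation contributes $(d+1)/|S_{d+1}|\cdot(1+O(1/p))$ per prime, producing $\exp(-\log(|S_{d+1}|/(d+1))\cdot\log X/\log\log X)$. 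In both cases the $O(1/p)$ corrections, constants, and sieve errors are absorbed into the $\exp(-\log\log\log X)$ factor. Finally, for any $K$ produced by the construction,
\[
\prod_{p>y}L_p(1,\rho)=1+O\!\left(\frac{1}{(\log\log|D_K|)^{1/2}}\right)
\]
would follow under GRH (and, for $S_5$, the strong Artin conjecture) from the analytic estimates on partial Euler products developed in Section~3, giving the claimed asymptotic for $L(1,\rho)$.

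The technical heart of the argument is the uniform counting step: one must handle $\pi(y)=\Theta(\log X/\log\log X)$ simultaneous local conditions with errors small enough to be absorbed into the $\exp(-\log\log\log X)$ factor. For $S_3$-cubic fields this is supplied by the uniform Davenport--Heilbronn theorem, while for $S_4$- and $S_5$-fields one must invoke Bhargava's asymptotics together with the associated sieves for local conditions. It is precisely this uniform-in-primes counting, rather than the analytic tail estimate, that forces $y$ to be of the order $\log X$ and pins down the exponent in the final lower bound.
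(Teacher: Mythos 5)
Your overall skeleton --- prescribe the Frobenius class at every prime $p\le y$ with $y\asymp\log X$, count fields with these $\pi(y)\asymp\log X/\log\log X$ simultaneous local conditions uniformly, and then show the primes $>y$ contribute $1+o(1)$ --- is indeed the paper's strategy (the paper's counting input is Conjecture \ref{counting}, established by Taniguchi--Thorne for $S_3$ and in \cite{CK3} for $S_4,S_5$, which plays the role of your ``uniform counting with sieves''). But the step you dismiss in one sentence is precisely where your argument breaks, and it is the main point of the paper. You claim that $\prod_{p>y}L_p(1,\rho)=1+O((\log\log|D_K|)^{-1/2})$ ``would follow under GRH from the analytic estimates of Section 3.'' It does not. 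Under GRH (Proposition \ref{App2}) one only gets $L(1,\rho)=\prod_{p<x}L_p(1,\rho)(1+o(1))$ with $x=(\log N)^{2+\epsilon}$, and for an individual field the leftover range $y<p<x$ contributes $\exp\bigl(\sum_{y<p<x}a_\rho(p)/p\bigl)(1+o(1))$, where trivially $\bigl|\sum_{y<p<x}a_\rho(p)/p\bigr|\le d\sum_{y<p<x}1/p\approx d\log\bigl(\tfrac{\log x}{\log y}\bigr)$, a bounded but \emph{not} negligible quantity. So pointwise you lose a bounded multiplicative factor and cannot recover the exact constants $(e^\gamma\log\log|D_K|)^d$ and $\zeta(d+1)/(e^\gamma\log\log|D_K|)$. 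The paper removes this obstruction by an average argument over the family: the $2r$-th moment bound of Proposition \ref{MP} (itself proved using the uniform counting with local conditions, and identified in the introduction as the analogue of Lamzouri's Proposition 2.4 and the main new difficulty), which yields Proposition \ref{Lam}: for all but $O(Xe^{-c'\frac{\log X}{\log\log X}\log\log\log X})$ fields one has $|\sum_{y<p<x}a_\rho(p)/p|\le(\log\log X)^{-1/2}$. One must then check that this exceptional count, together with the zero-density exceptional set, is smaller than the main term $\asymp \frac{X}{\log y}\exp(-\log\frac{|S_{d+1}|}{|C|}\cdot\frac{\log X}{\log\log X})$ of fields carrying the prescribed splitting; your proposal contains neither the moment estimate nor this comparison.

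Two further points. First, invoking GRH changes the nature of the theorem: the statement is unconditional for $d+1=3,4$ (and needs only the strong Artin conjecture for $d+1=5$); the paper gets the required zero-free region for almost all members of the family from the Kowalski--Michel zero-density theorem, not from GRH, and this is exactly why the strong Artin conjecture enters. Second, a minor slip: the Chebotarev density of the $(d+1)$-cycle class in $S_{d+1}$ is $d!/|S_{d+1}|=1/(d+1)$, not $(d+1)/|S_{d+1}|$; the per-prime factor in part (2) should come from $|C|=d!$ in the general formula $|C|/|S_{d+1}|$ used for the main term.
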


We also construct an infinite family of $S_{d+1}$-fields with bounded residues.

\begin{Thm}\label{bounded value} Let $d+1=3,4$, and $5$. For $d+1=5$, we assume the strong Artin conjecture. 

Then the number of $S_{d+1}$-fields $K$ of signature $(r_1,r_2)$ with $\frac X2\leq |D_K| \leq X$ for which
\begin{eqnarray*}
L(1,\rho)= \begin{cases} \zeta(2)^{\frac d2}(1+o(1)), &\text{if $d$ is even}\\ \zeta(2)^{\frac {d-3}2}\zeta(3)(1+o(1)), &\text{if $d\geq 3$ is odd}.\end{cases}.
\end{eqnarray*}
is $\geq A(r_2) X \exp\left(-\log \frac {|S_{d+1}|}{|C|}\cdot \frac {\log X}{\log\log X}-\log\log\log X \right)$,
where 
$$C=\begin{cases} (1,2)(3,4)\cdots (d-1, d), &\text{if $d$ is even}\\ (1,2)(3,4)\cdots (d-4, d-3)(d-2, d-1, d), &\text{if $d$ is odd}\end{cases}.
$$
\end{Thm}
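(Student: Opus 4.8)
The strategy is to produce many $S_{d+1}$-fields $K$ whose factorization type is controlled at every small prime, so that the Euler product for $L(s,\rho)$ at $s=1$ agrees, up to a negligible tail, with a fixed finite product. The mechanism is the one already used for Theorems \ref{true bound} and \ref{main}: for a $S_{d+1}$-field $K$ with Galois closure $\widehat K$, the local Euler factor of $L(s,\rho)$ at an unramified prime $p$ is determined by the Frobenius conjugacy class, hence by the splitting type of $p$ in $K$; if $\mathrm{Frob}_p$ lies in the conjugacy class $C$ then the standard-representation local factor is $\det(1 - \rho(C)p^{-s})^{-1}$, which for the element $C$ written above equals $(1-p^{-2})^{-d/2}$ when $d$ is even and $(1-p^{-2})^{-(d-3)/2}(1-p^{-3})^{-1}$ when $d$ is odd (the two $2$-cycles contribute eigenvalues $\pm1$ and the single $3$-cycle contributes cube roots of unity, all on the $(d)$-dimensional standard space after removing the trivial eigenvalue). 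So if $\mathrm{Frob}_p \in C$ for all $p \le y$ with $y$ a slowly growing function of $X$, then $L(1,\rho) = \prod_{p\le y}(\text{that local factor}) \cdot (1+o(1)) \to \zeta(2)^{d/2}$ or $\zeta(2)^{(d-3)/2}\zeta(3)$, because extending the product over all $p$ gives exactly those constants and the tail $p>y$ contributes $1+o(1)$ by GRH (or by the zero-density/Artin-conjecture input used earlier).

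The main step is therefore a counting statement: the number of $S_{d+1}$-fields $K$ of signature $(r_1,r_2)$ with $X/2 \le |D_K| \le X$ and $\mathrm{Frob}_p(\widehat K/\mathbb{Q}) \in C$ for every prime $p \le y$ is at least $A(r_2)\,X\exp\big(-\log\frac{|S_{d+1}|}{|C|}\cdot\frac{\log X}{\log\log X} - \log\log\log X\big)$. I would obtain this exactly as in the proof of Theorem \ref{main}: prescribing $\mathrm{Frob}_p$ to lie in a fixed class $C$ for all $p\le y$ is, by Chebotarev heuristics made rigorous through the field-counting results available in degrees $d+1=3,4,5$ (Davenport–Heilbronn and Bhargava, and the effective/uniform versions used throughout the paper), a condition of ``probability'' $\big(\frac{|C|}{|S_{d+1}|}\big)^{\pi(y)}$; choosing $y$ so that $\pi(y) \asymp \frac{\log X}{\log\log X}$ turns this into the stated exponential factor, while the signature condition only affects the implied constant, giving $A(r_2)$. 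One builds the fields by specifying local conditions at the primes $p\le y$ (prescribed splitting type) together with the archimedean condition, and invokes the equidistribution of $S_{d+1}$-fields over such local specifications; the number of admissible $K$ up to $X$ is then $\gg A(r_2)X / (\text{product of local densities})^{-1}$, which is the claimed bound.

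The hard part is the same as in Theorem \ref{main}: making the simultaneous prescription of Frobenius at all primes up to $y \approx \exp\!\big(c\frac{\log X}{\log\log X}\big)$ rigorous, since $y$ grows with $X$ and one needs a field-counting result uniform in a growing set of local conditions. For $d+1=3,4$ this rests on the quantitative results with power-saving error terms (and their refinements allowing many local conditions), and for $d+1=5$ on Bhargava's count together with the strong Artin conjecture, exactly as invoked earlier in the paper; I would simply cite that part of the argument rather than redo it. Once the count is in hand, the identification of the limiting constant is the short computation of $\det(1-\rho(C)T)$ for the explicit element $C$ above, and the passage from the finite Euler product to $\zeta(2)^{d/2}$ or $\zeta(2)^{(d-3)/2}\zeta(3)$ is immediate. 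A final remark: the two cases are engineered precisely so that $\rho(C)$ has no eigenvalue $1$ on the standard space, which is why the local factors are genuine values of $\zeta(2)$ and $\zeta(3)$ and the resulting residue is bounded rather than growing or decaying with $|D_K|$.
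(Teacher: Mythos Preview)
Your approach is the paper's approach: impose $\mathrm{Frob}_p\in C$ for all $p\le y$, count such fields via the uniform field-counting results (Conjecture~\ref{counting}), and then invoke the machinery of Section~4 to reduce $L(1,\rho)$ to the short Euler product $\prod_{p\le y}$, whose value for this particular $C$ is the stated constant. The computation of the local factor for $C$ is correct.

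There is, however, a genuine slip that matters. You first write $\pi(y)\asymp \frac{\log X}{\log\log X}$, which is right and forces $y\asymp \log X$; but later you say the hard part is prescribing Frobenius at all primes up to $y\approx \exp\!\big(c\frac{\log X}{\log\log X}\big)$. The second statement is wrong and would in fact make the argument collapse: the counting result (Conjecture~\ref{counting}) has error $(\prod_{p\le y}p)^\kappa X^\delta\approx e^{\kappa y}X^\delta$, which is only dominated by the main term when $y\le c_1\log X$. The paper takes exactly $y=c_1\log X$, and this is why no heroic uniformity is needed---the power-saving error terms already available for $d+1=3,4,5$ suffice. Second, and relatedly, ``the tail $p>y$ contributes $1+o(1)$ by GRH/zero-density'' skips a step. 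Zero-density (Kowalski--Michel) plus Proposition~\ref{App2} only gives $L(1,\rho)=\prod_{p<x}\cdots$ with $x=(\log X)^\beta$; getting from $x$ down to $y=c_1\log X$ for all but $O\big(Xe^{-c'\frac{\log X}{\log\log X}\log\log\log X}\big)$ fields requires the moment estimate of Proposition~\ref{MP}/Proposition~\ref{Lam}, since for primes in $(y,x)$ you have no control over $\mathrm{Frob}_p$. Once you correct $y$ to $c_1\log X$ and explicitly invoke Proposition~\ref{Lam} for the intermediate range, your sketch coincides with the paper's proof.
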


This work is motivated by the work of Lamzouri \cite{Lam, Lam2}, who constructed primitive characters $\chi$ with large values of $L(1,\chi)$. Basically, we follow \cite{Lam, Lam2, GS, MV}. The arguments in \cite{Lam} are easily extended. However, obtaining an analogue of Proposition 2.4 in \cite{Lam} is a main obstacle to extend his method. It is resolved in Proposition \ref{MP}.

\section{Counting number fields with local conditions} \label{Counting}
Let $K$ be a $S_{d+1}$-field of signature $(r_1,r_2)$ for $d+1\geq 3$. We assume that we can count $S_{d+1}$-fields with finitely many local conditions. Namely, let $\mathcal S = (\mathcal{LC}_p)$ be a finite set of local conditions: $\mathcal{LC}_p=\mathcal S_{p,C}$ means that 
$p$ is unramified and the conjugacy class of Frob$_p$ is $C$.  
Define $|\mathcal S_{p,C}|=\frac {|C|}{|S_n|(1+f(p))}$ for some function $f(p)$ which satisfies
$f(p)=O(\frac 1p)$. There are also several splitting types of ramified primes, which are denoted by $r_1,r_2,\dots,r_w$: $\mathcal{LC}_p=\mathcal S_{p,r_j}$ means that $p$ is ramified and its splitting type is $r_j$. 
We assume that there are positive valued functions $c_1(p)$, $c_2(p)$, $\dots$, $c_w(p)$ with $\sum_{i=1}^w c_i(p)=f(p)$ and define 
$|\mathcal S_{p,r_i}|=\frac{c_i(p)}{1+f(p)}$. 
We define the local condition $\mathcal{LC}_p=S_{p,r}$ which means that $p$ is ramified, i.e, $r=r_j$ for some $j$.
Define $|\mathcal S_{p,r}|=\frac{f(p)}{1+f{p}}$. 
Let $|\mathcal S|=\prod_p |\mathcal{LC}_p|$. 

Let $L(X)^{r_2}$ be the set of $S_{d+1}$-fields $K$ of signature $(r_1,r_2)$ with $\frac X2<|D_K| < X$, and
let $L(X;\mathcal S)^{r_2}$ be the set of $S_{d+1}$-fields $K$ of signature $(r_1,r_2)$ with $\frac X2<|D_K| < X$ and the local conditions 
$\mathcal S$. Then we have 
\begin{Conj} \label{counting}
For some positive constants $\delta < 1$ and $\kappa$,
\begin{eqnarray} \label{estimate}
|L(X)^{r_2}| &=& A(r_2) X +O(X^{\delta}),\\
|L(X;\mathcal S)^{r_2}| &=& |\mathcal S| A(r_2) X + O\left(\Big(\prod_{p \in S} p\Big)^{\kappa} X^{\delta} \right), \nonumber
\end{eqnarray}
where the implied constant is uniformly bounded for $p$ and local conditions at $p$. 
\end{Conj}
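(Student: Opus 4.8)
The plan is to \emph{derive} Conjecture \ref{counting} from the known parametrizations of low-degree number fields together with quantitative, power-saving lattice-point counts that are uniform over congruence conditions; the conjecture is isolated as a hypothesis precisely because, while the case $d+1=3$ is essentially in the literature, the uniform power-saving statement for $d+1=4,5$ is not. In all cases the scheme is the same: a degree-$(d+1)$ ring is encoded by a $GL$-orbit of integral forms on a prehomogeneous vector space, the splitting type of an unramified prime $p$ in the corresponding field is read off from the $GL(\mathbb{F}_p)$-orbit of the reduced form modulo $p$, so the local condition $\mathcal S_{p,C}$ becomes a union of residue classes modulo $p$ (or $p^2$, to detect ramification), and the archimedean condition fixing the signature $(r_1,r_2)$ cuts out a region whose volume is absorbed into the constant $A(r_2)$. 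The proportion of admissible forms modulo $p$ matches the Chebotarev proportion $|C|/|S_n|$ up to the contribution of $p$-maximal-but-non-maximal rings and of ramified splitting types, each of which costs a factor of size $O(1/p)$; this is exactly what produces the normalization $|\mathcal S_{p,C}|=\frac{|C|}{|S_n|(1+f(p))}$, $|\mathcal S_{p,r_i}|=\frac{c_i(p)}{1+f(p)}$ with $\sum_i c_i(p)=f(p)=O(1/p)$, so that the masses sum to $1$. The first line of \eqref{estimate} is then the main counting theorem, and the second follows by running the associated sieve with the error term tracked; the factor $\big(\prod_{p\in S}p\big)^{\kappa}$ records the index of the relevant congruence sublattice, equivalently the denominator of the local densities $|\mathcal{LC}_p|$.

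Concretely, for $d+1=3$ one invokes the Davenport--Heilbronn parametrization of cubic rings by $GL_2(\mathbb Z)$-classes of binary cubic forms, Belabas's explicit sieve, and the power-saving error terms with secondary main terms due to Bhargava--Shankar--Tsimerman and Taniguchi--Thorne, whose orbital-zeta-function methods already supply the uniformity in local conditions needed here. For $d+1=4$ and $5$ one argues analogously with Bhargava's parametrizations --- quartic rings by $GL_2(\mathbb Z)\times GL_3(\mathbb Z)$-orbits on $\mathbb Z^2\otimes\mathrm{Sym}^2\mathbb Z^3$ and quintic rings by $GL_4(\mathbb Z)\times GL_5(\mathbb Z)$-orbits on quadruples of $5\times 5$ skew-symmetric integral matrices, together with the cubic/sextic resolvent-ring bookkeeping --- so that the count of maximal rings of bounded discriminant with prescribed Frobenius data is obtained by Bhargava's averaging method over a fundamental domain, after removing (i) reducible orbits and (ii) non-maximal rings via the tail estimates of Bhargava--Shankar--Wang bounding rings with large square content, and then sharpening the error by the cusp-truncation refinements of Bhargava--Shankar--Tsimerman and Shankar--Tsimerman.

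The main obstacle --- and the reason \eqref{estimate} is stated as a conjecture rather than proved --- is getting, \emph{simultaneously}, a genuine power saving $\delta<1$ and full uniformity in the local conditions, i.e. an error of the shape $O\!\big((\prod_{p\in S}p)^{\kappa}X^{\delta}\big)$ with a single $\kappa$ valid for every finite set $S$, including sets containing primes as large as a power of $X$. The delicate points are the uniform treatment of the cusps of the fundamental domains (where the naive reducibility cutoff degrades as the modulus grows) and the uniform control of the non-maximal and ramified contributions as $p\to\infty$, which requires a square-free sieve that is polynomial in the modulus. For $d+1=3$ this package is available; for $d+1=4,5$ the sharpest published error terms and uniformity ranges fall short of what the second line of \eqref{estimate} demands, and closing that gap --- rather than any new idea about Dedekind residues --- is what would upgrade Conjecture \ref{counting} to a theorem in those degrees. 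In the body of the paper we therefore assume \eqref{estimate} and note that it is a theorem for $d+1=3$.
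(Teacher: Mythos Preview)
The statement is labeled a \emph{Conjecture} and the paper does not attempt to prove it; immediately after stating it, the authors write that the case $d+1=3$ is due to Taniguchi--Thorne \cite{TT} and that they themselves established the cases $d+1=4,5$ in \cite{CK3}. So the paper's ``proof'' consists entirely of these two citations.

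Your proposal instead sketches the underlying machinery --- Davenport--Heilbronn/Bhargava parametrizations, geometry-of-numbers on prehomogeneous vector spaces, uniformity of the error in congruence conditions --- which is indeed the framework behind \cite{TT} and \cite{CK3}. As a description of \emph{how} such counting theorems are obtained, your outline is accurate in spirit and matches the cited references. But it differs from the paper in a substantive factual point: you assert that for $d+1=4,5$ ``the sharpest published error terms and uniformity ranges fall short'' and that this is why the statement is left as a conjecture, concluding that only $d+1=3$ is a theorem. The paper explicitly contradicts this: it states that Conjecture~\ref{counting} \emph{is} a theorem for $S_4$- and $S_5$-fields, proved by the authors in \cite{CK3}, and the rest of the paper relies on this. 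Your closing sentence (``we therefore assume \eqref{estimate} and note that it is a theorem for $d+1=3$'') thus misrepresents the logical status of the hypothesis within the paper. If you want to align with the paper, the correct move is simply to cite \cite{TT} and \cite{CK3} and move on; if you want to give a self-contained argument, you would need to reproduce the uniform power-saving counts from \cite{CK3}, not declare them unavailable.
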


It is worth noting here that we can control only all the primes up to $c \log X$, where $c<(1-\delta)/\kappa$. If we impose local conditions for all $p\leq c'\log X$ with $c' \geq (1-\delta)/\kappa$, the error term in Conjecture \ref{counting} would be larger than the size of $L(X)^{r_2}$. 

For $S_3$-fields, the conjecture was shown by Taniguchi and Thorne \cite{TT}. In \cite{CK3}\footnote{In \cite{CK3}, we used the Greek letter $\gamma$ in place of $\kappa$. However, $\gamma$ is taken for the Euler-Mascheroni constant in this article.}, we proved that Conjecture \ref{counting} is true for $S_4$ and $S_5$-fields. 

\section{Formula for $L(1,\rho)$ under a certain zero-free region}

In this paper, we assume the strong Artin conjecture, namely, the Artin $L$-function $L(s,\rho)$ is an automorphic representation of 
$GL_d$. This is true for 
$S_3$-fields and $S_4$-fields. It implies the Artin conjecture, namely, $L(s,\rho)$ is entire. For this section, we only need the Artin conjecture. However, in Section 4, we need the strong Artin conjecture in order to use Kowalski-Michel zero density theorem \cite{KM}.
We find an expression of $L(1,\rho)$ as a product over small primes under assumption that $L(s,\rho)$ has a certain zero-free region. Here all the implicit constants only depend on the degree $d$ of $L(s,\rho)$. 

For $Re(s)>1$, $L(s,\rho)$ has the Euler product:
\begin{equation*}
L(s,\rho)=\prod_{p} \prod_{i=1}^d \left( 1-\frac{\alpha_i(p)}{p^s}\right)^{-1}.
\end{equation*} 

Then, for $Re(s)>1$, 
$$\log L(s,\rho)=\sum_{n=2}^\infty \frac {\Lambda(n)a_\rho(n)}{n^s \log n},
$$
where $a_\rho(p^k)=\alpha_1(p)^k+\cdots+\alpha_d(p)^k$. First, we show that when $L(s,\rho)$ has a certain zero-free region, 
the value $\log L(1,\rho)$ is determined by a short sum. 

\begin{Prop} \label{App1}
If $L(s,\rho)$ is entire and is zero-free in the rectangle $[\alpha, 1] \times [-x,x]$, where $x=(\log N)^\beta$,\, $\beta(1-\alpha)>2$,
 and $N$ is the conductor of $\rho$, then 
\begin{equation}\label{log-D}
\log L(1,\rho)=\sum_{n< x} \frac {\Lambda(n)a_\rho(n)}{n\log n} +O((\log N)^{-1}).
\end{equation}
\end{Prop}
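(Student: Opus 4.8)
The plan is to recover $\log L(1,\rho)$ from its Dirichlet series by a truncated Perron formula followed by a contour shift into the assumed zero-free region. Write $F(w):=\log L(1+w,\rho)=\sum_{n\ge 2}c_n n^{-w}$ with $c_n=\frac{\Lambda(n)a_\rho(n)}{n\log n}$; since $|\alpha_i(p)|\le 1$ one has $|a_\rho(n)|\le d$, so this series converges absolutely for $Re(w)>0$. Because $L(s,\rho)$ is assumed entire and is zero-free on the convex (hence simply connected) region $\{Re(s)>\alpha\}\cap\{|Im(s)|<x\}$ --- the part with $Re(s)\le 1$ by hypothesis, the part with $Re(s)>1$ by the Euler product --- the branch of $F$ determined there is holomorphic and agrees with the Dirichlet series; in particular $L(1,\rho)\ne 0$ and $F$ is holomorphic at $w=0$ with $F(0)=\log L(1,\rho)$. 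Applying the effective Perron formula with $c=1/\log x$ and $T=x/2$,
\[
\sum_{n<x}c_n=\frac{1}{2\pi i}\int_{c-iT}^{c+iT}F(w)\,\frac{x^w}{w}\,dw+E .
\]
To bound the Perron error $E$ I would split its defining sum according to the size of $|\log(x/n)|$: using $|c_n|\ll 1/n$, the terms with $|\log(x/n)|\gg 1/T$ contribute $\ll \frac{x^{1/\log x}\log T}{T}$, and since $T\asymp x$ the range $|\log(x/n)|\ll 1/T$ is an interval of length $O(1)$ around $x$, hence contains only $O(1)$ prime powers, each contributing $\ll 1/x$; altogether $E\ll \frac{\log x}{x}\ll(\log N)^{-1}$.

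Next I would move the line of integration to $Re(w)=\sigma_1-1$, where $\sigma_1$ is chosen with $\alpha<\sigma_1<1$ and $\beta(1-\sigma_1)>2$ --- possible because $\beta(1-\alpha)>2$ is a strict inequality. The rectangle with vertices $c\pm iT$ and $(\sigma_1-1)\pm iT$ lies inside $\{Re(w)>\alpha-1\}\cap\{|Im(w)|<x\}$, where $F(w)x^w/w$ is holomorphic except for a simple pole at $w=0$ of residue $F(0)=\log L(1,\rho)$; hence
\[
\sum_{n<x}c_n=\log L(1,\rho)+\frac{1}{2\pi i}\Big(\int_{\mathrm{hor}}+\int_{\mathrm{left}}\Big)F(w)\,\frac{x^w}{w}\,dw+E ,
\]
where $\int_{\mathrm{hor}}$ denotes the two horizontal segments at $Im(w)=\pm T$ and $\int_{\mathrm{left}}$ the segment at $Re(w)=\sigma_1-1$.

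The crux is a uniform bound $|F(w)|\ll\log N$ on this contour. For $Re(1+w)>1$ it is immediate from the Dirichlet series ($|F|\ll\log\log x$). For $\sigma_1\le Re(1+w)\le 1$ I would combine the convexity bound $|L(\sigma+it,\rho)|\ll_d (N(|t|+2))^{C_d}$, which follows from the functional equation of the entire Artin $L$-function together with Phragm\'en--Lindel\"of, with the zero-free hypothesis: fixing $t$ with $|t|\le T$ and applying the Borel--Carath\'eodory theorem on the disk centered at $2+it$ of radius $R$ with $2-\sigma_1<R<2-\alpha$ (and $R<x-T$, which holds since $R\ll 1$ while $x-T\asymp x$), so that the disk lies in the zero-free region, gives $|\log L(s,\rho)|\ll M+|\log L(2+it,\rho)|$ with $M\ll\log(N(|t|+2))\ll\log(Nx)\ll\log N$, uniformly for $s$ with $Re(s)\in[\sigma_1,2]$ on that horizontal line (and $\log L(2+it,\rho)=O(1)$). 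An equivalent route is to bound $\frac{L'}{L}(s,\rho)$ by its partial-fraction expansion over zeros $\rho'$ with $|Im(\rho')-t|\le 1$, each at distance $\ge\sigma_1-\alpha$ from the contour, so $\frac{L'}{L}(s,\rho)\ll_{\sigma_1}\log(N(|t|+2))$, and integrate from $2+it$. Granting the bound, the horizontal segments contribute
\[
\ll\frac{\log N}{T}\int_{\sigma_1-1}^{c}x^u\,du\ll\frac{\log N}{T\log x}\ll(\log N)^{-1},
\]
and the left segment contributes
\[
\ll\log N\cdot x^{\sigma_1-1}\int_{-T}^{T}\frac{dv}{|(\sigma_1-1)+iv|}\ll\log N\cdot(\log N)^{-\beta(1-\sigma_1)}\log x\ll(\log N)^{1-\beta(1-\sigma_1)}\log\log N\ll(\log N)^{-1},
\]
using $\beta(1-\sigma_1)>2$. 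Collecting the estimates yields $\sum_{n<x}c_n=\log L(1,\rho)+O((\log N)^{-1})$, which is equation $(\ref{log-D})$.

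I expect the main obstacle to be precisely this uniform bound $|\log L(1+w,\rho)|\ll\log N$ on the shifted contour: it is the one step where the Artin conjecture (entirety) and the functional equation are genuinely needed, and one must arrange the auxiliary parameters --- $\sigma_1$ just above $\alpha$, $T$ just below $x$, and the Borel--Carath\'eodory disks --- so that they remain inside the zero-free rectangle while still reaching from $Re(s)\approx 2$ down to $Re(s)=\sigma_1<1$. Everything else is routine bookkeeping with the Perron and contour estimates.
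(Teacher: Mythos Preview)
Your proof is correct and follows essentially the same route as the paper's: truncated Perron formula at $c=1/\log x$, contour shift into the zero-free rectangle, and Borel--Carath\'eodory combined with a convexity bound for $|L(s,\rho)|$ to control $|\log L(1+w,\rho)|$ on the shifted contour. The only differences are cosmetic parameter choices---you take $T=x/2$ and shift to an intermediate $\sigma_1\in(\alpha,1)$ with $\beta(1-\sigma_1)>2$ (yielding $|\log L|\ll\log N$), whereas the paper takes $T=x$ and shifts all the way to $Re(s)=\alpha+1/\log x$ (yielding $|\log L|\ll(\log x)\log N$, the extra $\log x$ being harmless in the final estimate $x^{\alpha-1}(\log N)(\log x)^2\ll(\log N)^{-1}$).
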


\begin{proof}
By Perron's formula,
$$\frac 1{2\pi i}\int_{c-ix}^{c+ix} \log L(1+s,\rho)\, \frac {x^s}s\, ds=\sum_{n< x} \frac {\Lambda(n)a_\rho(n)}{n\log n}+O\left(\frac {\log x}x\right).
$$
where $c=\frac 1{\log x}$. 

Now move the contour to $Re(s)=\alpha-1+\frac 1{\log x}$. We get the residue $\log L(1,\rho)$ at $s=0$. So the left hand side is $\log L(1,\rho)$ plus
$$\frac 1{2\pi i}\left(\int_{c-ix}^{\alpha-1+c-ix}+\int_{\alpha-1+c-ix}^{\alpha-1+c+ix}+\int_{\alpha-1+c+ix}^{c+ix}\right) \log L(1+s,\rho)\frac {x^s}s\, ds.
$$

In order to estimate $|\log L(s,\rho)|$ for $\alpha+c\leq Re(s)\leq 1+c$, we follow \cite[Lemma 8.1]{GS1}:
Consider the circles with centre $2+it$ and radii $r=2-\sigma<R=2-\alpha$. By the assumption, $\log L(s,\rho)$ is holomorphic inside the larger circle. By Daileda \cite[page 222]{D}, for $\frac 12<Re(s)\leq \frac 32$, $|L(s,\rho)|\leq N^{\frac 12}(|s|+1)^{\frac d2}$. Hence $Re \log L(s,\rho)=\log |\log L(s,\rho)|\ll \log N+\log (|s|+1).$ Clearly, if $Re(s)\geq \frac 32$, $|\log L(s,\rho)|=O(1)$. 
By the Borel-Carath\'eodory theorem, 
$$|\log L(s,\rho)|\leq \frac {2r}{R-r} \max_{|z-(2+it)|=R} Re \log L(z,\rho)+\frac {R+r}{R-r} |\log L(2+it,\rho)|\ll (\log x) (\log N+\log (|s|+1)).
$$
Hence the integral is majorized by $x^{\alpha-1} (\log N)(\log x)^2$. Since $\beta(1-\alpha)>2$, $x^{\alpha-1} (\log N)(\log x)^2\ll (\log N)^{-1}$.
\end{proof}

\begin{Rmk}
Assume that $L(s,\rho)$ satisfies GRH. Take $\alpha=1/2+\epsilon^2$ and $\beta=2+\epsilon$. Then, from the above proof, we can see that
$$
\log L(1,\rho)=\sum_{n< (\log N)^{2+\epsilon}} \frac {\Lambda(n)a_\rho(n)}{n\log n} + 
O\left( \frac{\log\log N}{(\log N)^{\frac {\epsilon}{2}-(2\epsilon^2+\epsilon^3)}}\right),
$$
for any $\epsilon>0$. 
\end{Rmk}

Now, using Proposition \ref{App1}, we express $L(1,\rho)$ as a product over small primes. We omit $p$ from $\alpha_i(p)$ for simplicity. 
\begin{equation}\label{sum}
\sum_{n<x} \frac {\Lambda(n)a_\rho(n)}{n\log n}=\sum_{k, p^k<x} \frac {\alpha_1^k+\cdots+\alpha_d^k}{k p^k}
=\sum_{p<x}\sum_{i=1}^d \sum_{k<\frac {\log x}{\log p}} \frac 1k (\alpha_i p^{-1})^k.
\end{equation}
Here
$$
\sum_{k<\frac {\log x}{\log p}} \frac 1k (\alpha_i p^{-1})^k=-\log(1-\alpha_i p^{-1})+A_p,
$$
where
$$|A_p|\leq \sum_{k\geq \frac {\log x}{\log p}} \frac 1k p^{-k}\leq \frac {\log p}{\log x}\cdot \frac {p^{-\frac {\log x}{\log p}}}{1-p^{-1}}.
$$
Here $p^{\frac {\log x}{\log p}}=x$.
Hence
$$(\ref{sum})=-\sum_{p<x} \sum_{i=1}^d \log (1-\alpha_i p^{-1})+d\sum_{p<x} A_p.
$$
Here
$$\sum_{p<x} |A_p|\leq \frac 1{x\log x} \sum_{p<x} \frac {\log p}{1-p^{-1}}\leq \frac 2{\log x}.
$$
Therefore, it is summarized as follows:
\begin{Prop} \label{App2}
If $L(s,\rho)$ is entire and is zero-free in the rectangle $[\alpha, 1] \times [-x,x]$, where $x=(\log N)^\beta$,\, $\beta(1-\alpha)>2$,
 and $N$ is the conductor of $\rho$, then 
\begin{equation}\label{L(1)}
L(1,\rho)=\prod_{p<x} \prod_{i=1}^d (1-\alpha_i p^{-1})^{-1} \left(1+O\left(\frac 1{\log x}\right)\right).
\end{equation}
Furthermore, if $L(s,\rho)$ satisfies GRH, then
$$
L(1,\rho)=\prod_{p<(\log N)^{2+\epsilon}} \prod_{i=1}^d (1-\alpha_i p^{-1})^{-1} \left(1+O\left(\frac 1{\log\log N}\right)\right).
$$
\end{Prop}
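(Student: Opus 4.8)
The plan is to derive Proposition~\ref{App2} from Proposition~\ref{App1} by expanding each prime-power contribution into the corresponding logarithm and then exponentiating. First I would start from the identity $\log L(1,\rho)=\sum_{n<x}\frac{\Lambda(n)a_\rho(n)}{n\log n}+O((\log N)^{-1})$ furnished by Proposition~\ref{App1} and rewrite the sum over $n$ as a sum over prime powers: since $\Lambda(p^k)=\log p$ and $a_\rho(p^k)=\alpha_1(p)^k+\cdots+\alpha_d(p)^k$, the right-hand side equals $\sum_{p<x}\sum_{i=1}^d\sum_{k<\log x/\log p}\frac1k(\alpha_i p^{-1})^k$.

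Next, for each fixed $p$ and $i$ I would complete the truncated inner sum to the full Taylor expansion of $-\log(1-\alpha_i p^{-1})$, writing $\sum_{k<\log x/\log p}\frac1k(\alpha_i p^{-1})^k=-\log(1-\alpha_i p^{-1})+A_p$ and estimating the tail by $|A_p|\le\sum_{k\ge\log x/\log p}k^{-1}p^{-k}$, which uses only $|\alpha_i(p)|\le1$. Since $p^{\log x/\log p}=x$, this gives $|A_p|\ll\frac{\log p}{\log x}\cdot\frac{x^{-1}}{1-p^{-1}}$, and summing over $p<x$ together with Chebyshev's bound $\sum_{p<x}\log p\ll x$ yields $\sum_{p<x}|A_p|\ll1/\log x$, so that the total tail contribution (after multiplying by the bounded number $d$ of parameters) is $O(1/\log x)$. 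Therefore $\log L(1,\rho)=-\sum_{p<x}\sum_{i=1}^d\log(1-\alpha_i p^{-1})+O(1/\log x)$, the $O((\log N)^{-1})$ term from Proposition~\ref{App1} being absorbed because $\log x=\beta\log\log N$. Exponentiating and using $e^{O(1/\log x)}=1+O(1/\log x)$ gives the first displayed formula. For the GRH assertion I would run exactly the same computation with the sharper truncation $x=(\log N)^{2+\epsilon}$ supplied by the Remark after Proposition~\ref{App1}; the truncation error there is a negative power of $\log N$, hence negligible, while $\log x\asymp\log\log N$, so the surviving error is $O(1/\log\log N)$.

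This is essentially bookkeeping, and I do not expect a serious obstacle at this stage. The two points deserving a line of care are (i) invoking $|\alpha_i(p)|\le1$ at every prime, including the ramified ones, where there are at most $d$ nonzero local parameters, so that each tail $A_p$ is genuinely geometric; and (ii) checking that the error inherited from Proposition~\ref{App1} is indeed dominated by $1/\log x$, which is immediate once one notes that $\log x$ has size $\log\log N$, not a power of $\log N$. The real work of this section is already spent on Proposition~\ref{App1}, where the Perron truncation has to be pushed through using only the partial zero-free region, via the Borel–Carath\'eodory bound for $\log L(s,\rho)$.
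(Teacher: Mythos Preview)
Your proposal is correct and follows essentially the same route as the paper: rewrite the short Dirichlet sum from Proposition~\ref{App1} as a sum over prime powers, complete each truncated Taylor series to $-\log(1-\alpha_i p^{-1})$ with a geometric tail bounded via $|\alpha_i(p)|\le 1$ and $p^{\log x/\log p}=x$, sum the tails using Chebyshev to get $O(1/\log x)$, and exponentiate. The GRH case is handled exactly as you describe, by invoking the Remark after Proposition~\ref{App1} with $x=(\log N)^{2+\epsilon}$.
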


In order to find the upper and lower bound of $L(1,\rho)$, we examine the Euler product: Let $C$ be a conjugacy class of $S_{d+1}$, and let $C$ be a product of $d_1,\cdots, d_k$ cycles, where $d_i\geq 1$ for all $i$ and $d_1+\cdots+d_k=d+1$. Then if $Frob_p\in C$, 
$(1-X)\prod_{i=1}^d (1-\alpha_i X)=(1-X^{d_1})\cdots (1-X^{d_k})$. 
Hence
$$\prod_{i=1}^d (1-\alpha_i p^{-1})^{-1}=(1-p^{-1}) (1-p^{-d_1})^{-1}\cdots (1-p^{-d_k})^{-1}.
$$
Now we use Mertens' theorem:
$$\prod_{p\leq y} (1-p^{-1})^{-1}=e^{\gamma} (1+o(1))\log y.
$$
Also $\prod_{p\leq y} (1-p^{-n})^{-1}=\zeta(n)(1+O(\frac 1{y\log y}))$ if $n\geq 2$.

Hence the upper bound of $\prod_{i=1}^d (1-\alpha_i p^{-1})^{-1}$ is when $C=1$, and it is $(1-p^{-1})^{-d}$. 
The lower bound is when $C=(1,\cdots, d+1)$, and it is $(1-p^{-1})(1-p^{-d-1})^{-1}$.
Moreover, it takes only the values $(1-p^{-e_1})^{-a_1}\cdots (1-p^{-e_l})^{-a_l}(1-p^{-1})^{a_0}$, where $e_1,...,e_l\geq 2$, and
$-d\leq a_0\leq 1$. Here $a_0=1$ only when $a_1e_1+\cdots+a_le_l=d+1$. We summarize it as

\begin{equation}\label{euler}
(1-p^{-1})(1-p^{-d-1})^{-1}\leq \prod_{i=1}^d (1-\alpha_i p^{-1})^{-1}\leq (1-p^{-1})^{-d}.
\end{equation}

We note that (\ref{euler}) is true even if $p$ is ramified, i.e., when some of $\alpha_i$'s are zero. Hence by the above proposition, under GRH and the strong Artin conjecture for $L(s,\rho)$, for any $\epsilon >0$,

\begin{eqnarray*}
\frac{\zeta(d+1)}{(2+\epsilon)e^\gamma \log\log N}\left(1+o(1)\right) \leq L(1,\rho)\leq \left( e^\gamma (2+\epsilon)\log\log N \right)^d \left(1+o(1)\right).
\end{eqnarray*}

Since $\epsilon$ is arbitrarily small, we showed
\begin{eqnarray*}
 \left(\frac 12+o(1)\right)\frac{\zeta(d+1)}{e^\gamma \log\log N} \leq L(1,\rho) \leq (2+o(1))^d (e^\gamma \log\log N)^d.
\end{eqnarray*}

\section{Extreme residue values}

\subsection{True upper and lower bound}

For simplicity, we denote $L(X)^{r_2}$ by $L(X)$.
Let $y=c_1\log X$ with $c_1>0$. 
Recall that in Proposition \ref{App1}, the conductor of $L(s,\rho)$ is $|D_K|$, and $\frac X2<|D_K|<X$, and $x=(\log X)^{\beta}$ for some $\beta$.

In this section we show that except for $O(X e^{-c' \frac {\log X}{\log\log X}\log\log\log X})$ in $L(X)$,
the lower bound and upper bound on $L(1,\rho)$ are 
$$(1+o(1))\frac{\zeta(d+1)}{e^{\gamma } (\log\log|D_K|)} ,\quad (1+o(1)) (e^{\gamma} \log\log |D_K|)^d,\quad \text{resp.}
$$

We apply Kowalski-Michel zero density theorem \cite{KM} to the family $L(X)$. Then except for $O\left((\log X)^{\beta B}X^{(\frac{5d}{2}+1) \frac{1-\alpha}{2\alpha-1}} \right)$ $L$-functions, every $L$-function $L(s,\rho)$ in $L(X)$ is zero-free on $[\alpha,1] \times [-(\log X)^\beta, (\log X)^\beta]$ with $\beta(1-\alpha)>2$. Here $B$ is a constant depending on the family $L(X)$. We refer to \cite{CK-JNT} for the detail. 

Since except for $O\left((\log X)^{\beta B}X^{(\frac{5d}{2}+1) \frac{1-\alpha}{2\alpha-1}} \right)$ $L$-functions, 
the $L$-functions in $L(X)$ have the desired zero-free region, we apply Proposition \ref{App2} to the $L$-functions in $L(X)$ to obtain
\begin{equation*}
L(1,\rho)=\prod_{p<x} \prod_{i=1}^d (1-\alpha_i p^{-1})^{-1} \left(1+O\left(\frac 1{\log x}\right)\right).
\end{equation*}
Since
$$\sum_{y<p<x} \frac 1{p^2}\leq \sum_{p>y} \frac 1{p^2}\leq \frac 2{y\log y},
$$
we can show
$$
\prod_{y<p<x} \prod_{i=1}^d (1-\alpha_i p^{-1})^{-1}= \exp\left(\sum_{y<p<x} \frac {a_{\rho}(p)}p\right) \left(1+ O\left(\frac 1{y\log y}\right)\right).
$$

We prove
\begin{Prop}\label{Lam}
Except for $O(X e^{-c' \frac {\log X}{\log\log X}\log\log\log X})$ L-functions in $L(X)$ for some constant $c'>0$, $L$-functions in $L(X)$ satisfy
\begin{equation}\label{small-sum}
\left|\sum_{y<p<x} \frac {a_{\rho}(p)}p\right| \leq \frac 1{(\log\log X)^{1/2}}.
\end{equation}
\end{Prop}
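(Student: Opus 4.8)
The plan is to bound the exceptional set by a high--moment argument, in the spirit of Lamzouri \cite{Lam} and Granville--Soundararajan \cite{GS}, using Conjecture \ref{counting} to evaluate the moments. Write $S(K)=\sum_{y<p<x}\frac{a_\rho(p)}{p}$ and $V=(\log\log X)^{-1/2}$, and fix $k=\lfloor\lambda\,\frac{\log X}{\log\log X}\rfloor$, where $\lambda>0$ is a small constant, depending only on $d,\beta$ and on the exponents $\delta,\kappa$ of Conjecture \ref{counting}, to be pinned down at the very end. Since $a_\rho$ is real valued, a $2k$--th moment (Chebyshev) bound gives
\[
\#\bigl\{K\in L(X):|S(K)|>V\bigr\}\ \le\ V^{-2k}\sum_{K\in L(X)}S(K)^{2k}
\ =\ V^{-2k}\!\!\sum_{y<p_1,\dots,p_{2k}<x}\!\frac{1}{p_1\cdots p_{2k}}\sum_{K\in L(X)}\ \prod_{i=1}^{2k}a_\rho(p_i),
\]
so everything comes down to the character sum $\sum_{K\in L(X)}\prod_i a_\rho(p_i)$ for a $2k$--tuple of primes in $(y,x)$.

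For a given tuple, group equal primes: let $q_1,\dots,q_r$ be the distinct values with multiplicities $m_1,\dots,m_r$, $\sum_j m_j=2k$. Because $\prod_j q_j\le\prod_i p_i<x^{2k}=(\log X)^{2k\beta}$, Conjecture \ref{counting}, applied with the local condition that prescribes $\mathrm{Frob}_{q_j}$ (or the ramification type) at each $q_j$ and summed over these choices, yields
\[
\sum_{K\in L(X)}\prod_{i}a_\rho(p_i)\ =\ A(r_2)\,X\,\prod_{j=1}^{r}M_{m_j}(q_j)\ +\ O\!\Bigl(\bigl(\textstyle\prod_j q_j\bigr)^{\kappa}X^{\delta}\Bigr),
\]
where $M_m(q)=\sum_C|\mathcal S_{q,C}|\,a_\rho(C)^m+\sum_i|\mathcal S_{q,r_i}|\,a_\rho(r_i)^m$ is the $m$--th moment of $a_\rho(q)$ across the family (and the number of summed choices, at most a constant to the power $r\le2k$, has been absorbed into the error). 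The decisive input --- the $S_{d+1}$ analogue of \cite[Proposition 2.4]{Lam}, which is our Proposition \ref{MP} below --- is that $a_\rho(q)+1$ counts the fixed points of $\mathrm{Frob}_q$, and a uniformly random element of $S_{d+1}$ has on average exactly one fixed point (its standard action is transitive), so the unramified part of $M_1(q)$ vanishes identically and hence $M_1(q)=O(1/q)$; whereas $|M_m(q)|\le\mu_m=O_d(1)$ for $m\ge2$, with $M_2(q)=\mu_2+O(1/q)$ and $\mu_2>0$ the variance of the number of fixed points. Thus a tuple containing a prime of multiplicity one gains a spare factor $O(1/q)$, and the dominant contributions come from tuples in which every prime appears at least twice, so that there are at most $k$ distinct primes. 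Summing the main terms: the number of partitions of $\{1,\dots,2k\}$ into blocks of size $\ge2$ is $\le(2k)^k$, and --- using $|a_\rho|\le d$ and $1/q^m\le1/q^2$ --- each block contributes at most $\sum_{y<p<x}\frac{O(1)}{p^2}\le\frac{C}{y\log y}$ by the estimate recorded just before the statement, while tuples with a multiplicity--one prime are bounded the same way, the $1/q$ factors supplying the decay. Altogether
\[
\sum_{K\in L(X)}S(K)^{2k}\ \ll\ A(r_2)\,X\Bigl(\frac{C'k}{y\log y}\Bigr)^{k}\ +\ \Bigl(\sum_{y<p<x}\tfrac1p\Bigr)^{2k}\!x^{2k\kappa}X^{\delta}.
\]

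Finally we calibrate $\lambda$. For the error term, $\sum_{y<p<x}\tfrac1p\le2\log\log\log X$ and $x^{2k\kappa}=\exp(2\lambda\kappa\beta\log X)$, so after multiplying by $V^{-2k}=\exp(o(\log X))$ the second term above stays $\le\exp\bigl((2\lambda\kappa\beta+\delta+o(1))\log X\bigr)$, which is $\ll X^{1-\eta}$ for some $\eta>0$ as soon as $\lambda<\frac{1-\delta}{2\kappa\beta}$ --- far below the target bound. For the main term, $V^{-2k}=(\log\log X)^k$ and $y\log y\asymp c_1\log X\log\log X$, so
\[
V^{-2k}\cdot A(r_2)\,X\Bigl(\frac{C'k}{y\log y}\Bigr)^{k}\ \ll\ X\Bigl(\frac{C'k}{c_1\log X}\Bigr)^{k}\ =\ X\exp\!\Bigl(-k\log\tfrac{c_1\log X}{C'k}\Bigr)\ =\ X\exp\!\Bigl(-\lambda\,\tfrac{\log X}{\log\log X}\log\log\log X\,(1+o(1))\Bigr),
\]
since $\log\tfrac{c_1\log X}{C'k}=\log\log\log X+O(1)$ for $k=\lfloor\lambda\tfrac{\log X}{\log\log X}\rfloor$. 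Taking any $c'$ with $0<c'<\lambda$ (say $c'=\lambda/2$) gives $\#\{K\in L(X):|S(K)|>V\}\ll X\exp\!\bigl(-c'\tfrac{\log X}{\log\log X}\log\log\log X\bigr)$, which is the claim.

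The hard part is Proposition \ref{MP}: pinning down the family moments $M_m(q)$ of $a_\rho(q)$ precisely enough --- above all that $M_1(q)=O(1/q)$, so that multiplicity--one primes do not contribute at main order, and that $M_2(q)$ is bounded away from $0$ and $\infty$ --- which forces one to work with the cycle structure of $S_{d+1}$, the explicit local densities $|\mathcal S_{q,C}|$, and the ramified contributions. A subtler but more mechanical point is the tension between wanting $k\asymp\log X/\log\log X$ and the fact that Conjecture \ref{counting} is a genuine asymptotic only when the controlled primes have product $\ll X^{(1-\delta)/\kappa}$; it is exactly this that prevents the exceptional set from being smaller than $\exp(-c'\tfrac{\log X}{\log\log X}\log\log\log X)$.
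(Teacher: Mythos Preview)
Your overall architecture is exactly the paper's: bound the exceptional set by Chebyshev against the $2k$-th moment of $S(K)=\sum_{y<p<x}a_\rho(p)/p$, evaluate that moment via the counting conjecture, and exploit that the unramified part of the first family-moment $M_1(q)$ vanishes by orthogonality of characters (your fixed-point remark is equivalent to $\sum_{g\in S_{d+1}}\chi_\rho(g)=0$). Your calibration of $k$ and the final Chebyshev step are correct and match the paper line for line.

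There is, however, a genuine gap in your sketch of the moment bound. You write ``using $1/q^m\le 1/q^2$, each block contributes at most $C/(y\log y)$'' and then conclude $\sum_K S(K)^{2k}\ll X(C'k/(y\log y))^k$. But with that uniform per-block bound, a partition with $r$ blocks contributes $(C/(y\log y))^r$, and for $r<k$ this is \emph{larger} than $(C/(y\log y))^k$; in particular the single-block partition already gives $C/(y\log y)$, which exceeds $(C'k/(y\log y))^k$ by a huge margin. Multiplying by $V^{-2k}$ one gets only $X^{1+o(1)}/(y\log y)$, not the target $X\exp(-c'\tfrac{\log X}{\log\log X}\log\log\log X)$.

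The paper repairs this by \emph{not} throwing away the extra decay for large blocks: it uses $\sum_{p>y}p^{-m}\ll (y^{m-1}\log y)^{-1}$, so a composition $(r_1,\dots,r_u)$ with $\sum r_i=2r$ and all $r_i\ge 2$ contributes $\ll 2^{2r}/(y^{2r-u}(\log y)^u)$. The multinomial weight $(2r)!/(u!\,r_1!\cdots r_u!)$ is then controlled by the key inequality
\[
\frac{r!}{u!\,r_1!\cdots r_u!}\ \le\ \Bigl(\frac{y}{\log y}\Bigr)^{r-u},
\]
which is where the hypothesis $r\le c_2\log X/\log\log X\asymp y/\log y$ is actually used; this yields the uniform bound $(2r)!/r!\cdot 2^{2r}/(y\log y)^r$ for every such composition. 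The case where some $r_i=1$ (your multiplicity-one primes) needs the same sharper bookkeeping, supplied in the paper by Lemma~\ref{tech lemma}. So your instinct that the tension between $k$ and the range of validity of Conjecture~\ref{counting} is ``subtler but more mechanical'' understates it: that constraint is precisely what makes the combinatorics close.
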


Hence, for $L$-functions which have the desired zero-free region and satisfy $(\ref{small-sum})$, 
\begin{eqnarray*}
L(1,\rho)=\prod_{p\leq y} \prod_{i=1}^d \left( 1-\alpha_i p^{-1}\right)^{-1}\left(1+ \frac{1}{(\log\log |D_K|)^{1/2} } \right).
\end{eqnarray*}

This and (\ref{euler}) implies immediately Theorem \ref{true bound}. 

In order to prove Proposition \ref{Lam}, we follow the idea in \cite{Lam}. Namely we prove
\begin{Prop}\label{MP} Let $y=c_1 \log X$ and $r \leq c_2 \frac{\log X}{\log\log X}$ for some positive constants $c_1$ and $c_2$. Then, 
$$\sum_{\rho\in L(X)} \left(\sum_{y<p<x} \frac {a_{\rho}(p)}p\right)^{2r}\ll 2^{2r-1} d^{2r}  \frac {(2r)!}{r!} \frac{2^{2r}}{(y \log y)^r}X,
$$
with an absolute implied constant.
\end{Prop}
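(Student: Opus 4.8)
\medskip

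The plan is to adapt the moment computation of \cite{Lam} --- this being the analogue of \cite[Proposition 2.4]{Lam} mentioned in the introduction --- with Conjecture \ref{counting} playing the role of the orthogonality relations for Dirichlet characters. First I would expand the $2r$-th power and interchange the two summations:
$$
\sum_{\rho\in L(X)}\Big(\sum_{y<p<x}\frac{a_\rho(p)}{p}\Big)^{2r}=\sum_{y<p_1,\dots,p_{2r}<x}\frac{1}{p_1\cdots p_{2r}}\sum_{\rho\in L(X)}a_\rho(p_1)\cdots a_\rho(p_{2r}).
$$
For each $2r$-tuple, record the distinct primes $q_1,\dots,q_s$ occurring ($s\le 2r$) with multiplicities $m_1,\dots,m_s$. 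Since $a_\rho(p)$ depends only on the splitting type of $\rho$ at $p$, and there are at most $D_0=D_0(d)$ such types (conjugacy classes of $S_{d+1}$ together with ramified splitting types), $\sum_{\rho\in L(X)}\prod_i a_\rho(q_i)^{m_i}$ decomposes into at most $D_0^{\,s}$ counts of fields with prescribed local behaviour at $q_1,\dots,q_s$; applying Conjecture \ref{counting} to each, and using the multiplicativity of $|\mathcal S|$, gives
$$
\sum_{\rho\in L(X)}\prod_{i=1}^{s}a_\rho(q_i)^{m_i}=A(r_2)X\prod_{i=1}^{s}\mu_{m_i}(q_i)+O\Big(D_0^{\,s}d^{2r}\big(\textstyle\prod_i q_i\big)^{\kappa}X^{\delta}\Big),
$$
where $\mu_m(p)=\sum_{C}\frac{|C|}{|S_{d+1}|(1+f(p))}(\mathrm{tr}\,\rho(C))^m+(\text{ramified contribution})$ is the average of $a_\rho(p)^m$ against the local density at $p$. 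By (\ref{euler}), $|a_\rho(p)|\le d$, so $|\mu_m(p)|\le d^m$; moreover $\mu_1(p)=O(1/p)$, because $\sum_C\tfrac{|C|}{|S_{d+1}|}\mathrm{tr}\,\rho(C)=\la\chi_\rho,\mathbf 1\ra=0$ kills the unramified main part, leaving only the ramified terms of size $O(1/p)$; and $\mu_2(p)=1+O(1/p)$, because $\la\chi_\rho,\chi_\rho\ra=1$ ($\rho$ being irreducible and self-dual). These three facts replace orthogonality: on average over the family, $a_\rho(p)$ behaves like a mean-zero random variable of variance $\approx 1$, independent across distinct primes.

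Next I would dispose of the error terms. Each $q_i<x=(\log X)^{\beta}$ and $s\le 2r$, so $(\prod_i q_i)^{\kappa}\le(\prod_i p_i)^{\kappa}$, hence $(\prod_i q_i)^{\kappa}/(p_1\cdots p_{2r})\le\prod_i p_i^{\kappa-1}$; summing over all tuples and using $\sum_{y<p<x}p^{\kappa-1}\ll x^{\kappa}$, the total error is $\ll (D_0 d)^{2r}\,x^{2r\kappa}\,X^{\delta}=(D_0 d)^{2r}(\log X)^{2r\beta\kappa}X^{\delta}$. Since $r\le c_2\frac{\log X}{\log\log X}$ we have $(\log X)^{2r\beta\kappa}\le X^{2c_2\beta\kappa}$ and $(D_0d)^{2r}\le X^{o(1)}$, so the total error is $\ll X^{\delta+2c_2\beta\kappa+o(1)}$, which is $\ll X^{1-\eta}$ for some $\eta>0$ once $c_2$ is small enough; one also needs $c_1<(1-\delta)/\kappa$ so that Conjecture \ref{counting} is legitimately applicable on the primes involved. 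This error is negligible against the main term.

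For the main term $A(r_2)X\sum\frac{\prod_i\mu_{m_i}(q_i)}{p_1\cdots p_{2r}}$, I would pass to absolute values, group the tuples by the set partition $P$ of $\{1,\dots,2r\}$ recording equal primes, and --- since all terms are now nonnegative --- drop the distinctness constraint on the primes attached to the blocks of $P$, bounding everything by $A(r_2)X\sum_P\prod_{B\in P}\big(\sum_{y<p<x}|\mu_{|B|}(p)|\,p^{-|B|}\big)$. Using $\mu_1=O(1/p)$, $\mu_2=1+O(1/p)$, $|\mu_m|\le d^m$ and $\sum_{p>y}p^{-m}\ll(y^{m-1}\log y)^{-1}$, a block of size $1$ contributes $\ll(y\log y)^{-1}$, a block of size $2$ contributes $\le 4(y\log y)^{-1}$, and a block of size $m\ge 3$ contributes $\ll d^m(y^{m-1}\log y)^{-1}$. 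Hence the dominant partitions are the $(2r-1)!!=\frac{(2r)!}{2^r r!}$ perfect matchings, each contributing at most $\big(\sum_{y<p<x}|\mu_2(p)|p^{-2}\big)^{r}\le 2^{2r}(y\log y)^{-r}$, while any other partition is smaller by a factor $\ll d^2\log y/y$ (or $\ll 1/y$). A bookkeeping of the number of partitions --- for the blocks of size $\ge 2$ via $\frac{(2r)!}{s!}[x^{2r}](e^x-1-x)^s\le\frac{(2r)!\,s^{2r-2s}}{2^s\,s!\,(2r-2s)!}$ --- then shows that the non-matching partitions contribute only $O(1)$ times the matchings, the relevant series converging precisely because $r\le c_2\frac{\log X}{\log\log X}$ with $c_2$ small (so that quantities like $d\,r^{3/2}(\log y/y)^{1/2}$ stay below $r\log 2$). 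Altogether the main term is $\ll\frac{(2r)!}{r!}\cdot\frac{2^{2r}}{(y\log y)^r}\,X$, which is comfortably inside the claimed bound $2^{2r-1}d^{2r}\frac{(2r)!}{r!}\frac{2^{2r}}{(y\log y)^{r}}X$.

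The hard part is the passage through Conjecture \ref{counting}: in \cite{Lam} the average of $\prod_i\chi(p_i)$ over characters is an exact main term plus \emph{zero}, whereas here each imposed local condition costs an error $(\prod p)^{\kappa}X^{\delta}$ that is usable only while the product of the relevant primes stays below a fixed power of $X$. Controlling these errors simultaneously over all $\pi(x)^{2r}$ tuples of primes below $(\log X)^{\beta}$ is exactly what restricts $r$ to $r\le c_2\frac{\log X}{\log\log X}$ and what pins down the admissible ranges of $c_1,c_2$ (and, through the zero-density input behind Proposition \ref{App2}, of $\alpha,\beta$). Once that is settled, the combinatorics of the main term is essentially that of \cite{Lam}.
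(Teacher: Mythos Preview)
Your approach is essentially the paper's: expand the $2r$-th moment, push the sum over $\rho$ inside, evaluate it via Conjecture~\ref{counting}, and exploit $\sum_C |C|\,\chi_\rho(C)=0$ to make the singleton-multiplicity primes contribute an extra factor $1/p$. The error terms $(\prod p)^\kappa X^\delta$ are controlled exactly as you say, by $r\le c_2\log X/\log\log X$ with $c_2$ small; this is also how the paper does it.

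The only genuine difference is in the combinatorial bookkeeping of the main term. You group tuples by the induced \emph{set partition} of $\{1,\dots,2r\}$, argue that the perfect matchings dominate, and bound the remaining partitions via the generating-function inequality $[x^{2r}](e^x-1-x)^s\le s^{2r-2s}/(2^s(2r-2s)!)$. The paper instead groups by the \emph{composition} $(r_1,\dots,r_u)$ of $2r$ and shows, via an explicit elementary lemma (their Lemma~\ref{tech lemma}), that \emph{every} composition contributes at most $d^{2r}X\frac{(2r)!}{r!}\frac{2^{2r}}{(y\log y)^r}$; multiplying by the $2^{2r-1}$ compositions gives the stated bound. Your route is more probabilistic in flavor and, if carried through carefully, yields a sharper constant (no $2^{2r-1}d^{2r}$); the paper's route is cruder but entirely explicit. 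Your sentence ``so that quantities like $d\,r^{3/2}(\log y/y)^{1/2}$ stay below $r\log 2$'' is the one place I would ask you to write out in full: with $r\asymp y/\log y$ this quantity is $\asymp d(c_2/c_1)^{1/2}r$, so it does work for $c_2$ small, but the summation over non-matching partitions (including those with singleton blocks) deserves a line or two more than you give it. The paper's Lemma~\ref{tech lemma} is precisely what replaces that argument.
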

By Stirling's formula,
$\displaystyle 2^{2r-1} d^{2r} \frac {(2r)!}{r!} \frac{2^{2r}}{(y \log y)^r} \ll \left( \frac{cd^2r}{ y \log y}\right)^r$ for a constant $c$.

\begin{proof}
By multinomial formula, the left hand side is
\begin{equation}\label{multi}
\sum_{\rho\in L(X)} \sum_{u=1}^{2r}  \frac 1{u!} {\sum}_{r_1,...,r_u}^{(1)}\frac {(2r)!}{r_1!\cdots r_u!}  {\sum}_{p_1,...,p_u}^{(2)} \frac {a_{\rho}(p_1)^{r_1}\cdots a_{\rho}(p_u)^{r_u}}{p_1^{r_1}\cdots p_u^{r_u}},
\end{equation}
where $\sum_{r_1,...,r_u}^{(1)}$ means the sum over the $u$-tuples $(r_1,...,r_u)$ of positive integers such that $r_1+\cdots+r_u=2r$, and
$\sum_{p_1,...,p_u}^{(2)}$ means the sum over the $u$-tuples $(p_1,...,p_u)$ of distinct primes such that $y<p_i<x$ for each $i$.
Write
$$(\ref{multi})=\sum_{u=1}^{2r} {\sum}_{r_1,...,r_u}^{(1)}\frac {(2r)!}{r_1!\cdots r_u!} \frac 1{u!}  {\sum}_{p_1,...,p_u}^{(2)}
\frac 1{p_1^{r_1}\cdots p_u^{r_u}} \left(\sum_{\rho\in L(X)} a_{\rho}(p_1)^{r_1}\cdots a_{\rho}(p_u)^{r_u}\right).
$$

We will show that for any composition $r_1+r_2+\cdots+r_u=2r$,
\begin{equation} \label{composition-bound}
\frac {(2r)!}{r_1!\cdots r_u!} \frac 1{u!}  {\sum}_{p_1,...,p_u}^{(2)}
\frac 1{p_1^{r_1}\cdots p_u^{r_u}} \left(\sum_{\rho\in L(X)} a_{\rho}(p_1)^{r_1}\cdots a_{\rho}(p_u)^{r_u}\right)
\ll d^{2r}X \frac{(2r)!}{r!}\frac{2^{2r}}{(y \log y)^r}.
\end{equation}

Since the number of compositions of $2r$ is $2^{2r-1}$, it implies that
$$
(\ref{multi}) \ll 2^{2r-1} d^{2r}  \frac {(2r)!}{r!} \frac {2^{2r}}{(y\log y)^r} X.
$$

First, we consider compositions with $r_i \geq 2$ for all $i$. Then by using the trivial bound,
\begin{eqnarray*}
&& {\sum}_{p_1,...,p_u}^{(2)}
\frac 1{p_1^{r_1}\cdots p_u^{r_u}} \left(\sum_{\rho\in L(X)} a_{\rho}(p_1)^{r_1}\cdots a_{\rho}(p_u)^{r_u}\right) 
\ll d^{2r}X \left(\sum_{y<p_1<x} \frac 1{p_1^{r_1}}\right)\cdots \left(\sum_{y<p_u<x} \frac 1{p_u^{r_u}}\right) \\
&&\ll d^{2r}X \frac{2^{2r}}{(y \log y)^r} \left( \frac{\log y}{y}\right)^{r-u}. 
\end{eqnarray*}

Hence $(\ref{composition-bound})$ is proved once we show that for any $r_1,...,r_u$ such that $r_1+\cdots+r_u=2r$, and $r_i\geq 2$ for all $i$,
\begin{equation*}
\frac{1}{u!r_1! \cdots r_u!} \left( \frac{ \log y}{y} \right)^{r-u} \leq \frac{1}{r!},
\end{equation*}
or equivalently
\begin{equation} \label{composition-inequality}
\frac{r!}{u!r_1! \cdots r_u!} \leq \left( \frac{y}{\log y} \right)^{r-u}.
\end{equation}

Since $r_i \geq 2$ for all $i=1,2,\dots,u$, we have $u \leq r$. Since $y=c_1 \log X$ and $r\leq c_2 \frac{\log X}{\log \log X}$, $r\leq \frac{y}{\log y}$ for sufficiently small $c_2$. 
Then 
$$\frac{r!}{u!r_1! \cdots r_u!}\leq \frac {r!}{u!}=r(r-1)\cdots (r-u+1) \leq r^{r-u}\leq \left( \frac{y}{\log y} \right)^{r-u}.
$$

Next, suppose $r_i=1$ for some $i$. We may assume that $r_1+\cdots+r_m+r_{m+1}+\cdots +r_u=2r$, $r_1=...=r_{m}=1$, and $r_{m+1}>1,...,r_u>1$.
First, we need a technical combinatorial lemma.

\begin{Lem} \label{tech lemma}
Let $r_i$'s be as above. Then
\begin{eqnarray}\label{composition-ineq2}
 \frac{1}{u!}\cdot \frac{1}{r_1!r_2!\dots r_m! r_{m+1}! \dots r_u!}\cdot \frac{y^u}{y^{m+r}}\cdot \frac{(\log y)^r}{(\log y)^u} \leq \frac{1}{r!}.
\end{eqnarray}
\end{Lem}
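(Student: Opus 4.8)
The plan is to prove the technical inequality~(\ref{composition-ineq2}) by reducing it, via elementary manipulations of factorials, to the already-established inequality~(\ref{composition-inequality}) for compositions with all parts $\geq 2$. First I would rewrite the left-hand side of~(\ref{composition-ineq2}) by separating the $m$ parts equal to $1$ (each contributing $r_i!=1$) from the $u-m$ parts $r_{m+1},\dots,r_u$ that are $\geq 2$. Let $s=r_{m+1}+\cdots+r_u$, so that $m+s=2r$, i.e. $s=2r-m$. Since each of the $u-m$ surviving parts is at least $2$, we have $u-m\leq s/2 = r-m/2$. The claim~(\ref{composition-ineq2}) then becomes, after clearing the $(\log y)^{r-u}$ and $y^{\,u-m-r}$ factors,
\begin{equation*}
\frac{r!}{u!\,r_{m+1}!\cdots r_u!}\;\left(\frac{\log y}{y}\right)^{r-u}\;\frac{1}{y^{m}}\;\left(\frac{y}{\log y}\right)^{0}\leq 1,
\end{equation*}
which I would regroup so that the ``all parts $\geq 2$'' count $u-m$ and the total $s=2r-m$ play the roles of $u$ and $2r$ in~(\ref{composition-inequality}).

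The key step is the following observation: applying~(\ref{composition-inequality}) with $r$ replaced by $\lceil s/2\rceil$ (or just using the same style of bound $\frac{a!}{b!\,r_{m+1}!\cdots r_u!}\leq a^{a-b}$ together with $r_i\geq 2$) gives
\begin{equation*}
\frac{r!}{u!\,r_{m+1}!\cdots r_u!}\leq \frac{r!}{u!}=r(r-1)\cdots(r-u+1)\leq r^{\,r-u},
\end{equation*}
exactly as in the $r_i\geq 2$ case. So the left side of~(\ref{composition-ineq2}) is at most
\begin{equation*}
r^{\,r-u}\cdot\frac{1}{y^{m+r-u}}\cdot(\log y)^{r-u}
= \left(\frac{r\log y}{y}\right)^{r-u}\cdot\frac{1}{y^{m}}.
\end{equation*}
Since $y=c_1\log X$ and $r\leq c_2\frac{\log X}{\log\log X}$, for $c_2$ small enough we have $r\log y\leq y$, so the first factor is $\leq 1$; and $y^{-m}\leq 1$ since $m\geq 0$ and $y\geq 1$. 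Hence the product is $\leq 1$, as required. (The genuinely useful content here is that having some parts equal to $1$ only helps: it forces $m\geq 1$, which supplies the extra saving $y^{-m}$ that exactly compensates for the fact that $u$ can now be as large as $2r$ rather than bounded by $r$.)

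I expect the main subtlety — rather than a true obstacle — to be bookkeeping: making sure the exponent of $y$ and of $\log y$ on the left of~(\ref{composition-ineq2}) is correctly split between the ``$r^{r-u}$'' factor and the leftover $y^{-m}$, and checking the edge cases $m=u$ (all parts equal $1$, so $u=2r$, $u-m=0$, and the bound reads $(r\log y/y)^{r-2r}=(y/(r\log y))^{r}\geq 1$ — wait, this needs $r-u\geq 0$). In that degenerate case $r-u = -r<0$, so I must instead bound directly: with all $r_i=1$ we have $u=2r$ and the left side of~(\ref{composition-ineq2}) is $\frac{1}{(2r)!}\cdot y^{2r-(2r)}\cdot\frac{y^{-2r}\cdot y^{2r}}{1}\cdot(\log y)^{r-2r}$; one simplifies and uses $\frac{r!}{(2r)!}\leq 1$ together with $(\log y/y)^{r}\leq 1$ to conclude. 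So the clean way to organize the proof is to treat the quantity $r-u$ by cases according to its sign, using $r\log y\leq y$ when $r-u\geq 0$ and using $\log y\leq y$ (hence $(\log y/y)^{u-r}\leq 1$) together with $\frac{r!}{u!}\leq 1$ when $r-u<0$; in both cases the surplus $y^{-m}\leq 1$ seals the inequality. I would present it with this case split to keep every exponent manifestly nonnegative where it needs to be.
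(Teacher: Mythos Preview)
Your argument in the case $r\geq u$ is correct and in fact slicker than the paper's: you avoid the even/odd split on $m$ by using the crude bound $\dfrac{r!}{u!\,r_{m+1}!\cdots r_u!}\leq \dfrac{r!}{u!}\leq r^{\,r-u}$ directly, so that the left side of~(\ref{composition-ineq2}) times $r!$ is at most $\bigl(\tfrac{r\log y}{y}\bigr)^{r-u}\cdot y^{-m}\leq 1$.

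However, the case $r<u$ is genuinely broken. After multiplying~(\ref{composition-ineq2}) by $r!$ the expression is
\[
\frac{r!}{u!\,r_{m+1}!\cdots r_u!}\cdot\Bigl(\frac{\log y}{y}\Bigr)^{r-u}\cdot\frac{1}{y^{m}},
\]
and when $r-u<0$ the middle factor equals $\bigl(\tfrac{y}{\log y}\bigr)^{u-r}\geq 1$, not $\bigl(\tfrac{\log y}{y}\bigr)^{u-r}\leq 1$ as you write. So bounding $\tfrac{r!}{u!}\leq 1$ and invoking $y^{-m}\leq 1$ does \emph{not} finish: you still have a factor $\bigl(\tfrac{y}{\log y}\bigr)^{u-r}$ pointing the wrong way. (Your handling of the extreme case $m=u=2r$ has the same sign slip: there the needed inequality is $(y\log y)^{r}\geq 1$, not $(\log y/y)^{r}\leq 1$.) This case really does occur: e.g.\ $r_1=r_2=1$, $r_3=\cdots=r_{r+1}=2$ gives $u=r+1>r$, and $m=u=2r$ is allowed.

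The missing ingredient is the constraint $u\leq r+\tfrac{m}{2}$, which follows from $r_{m+1},\dots,r_u\geq 2$ and $r_{m+1}+\cdots+r_u=2r-m$. With it, $\bigl(\tfrac{y}{\log y}\bigr)^{u-r}\leq \bigl(\tfrac{y}{\log y}\bigr)^{m/2}\leq y^{m}$, which is exactly what $y^{-m}$ absorbs. The paper builds this in by applying~(\ref{composition-inequality}) to the sub-composition $r_{m+1},\dots,r_u$ of $2r-m$ (splitting on the parity of $m$ to keep $\tfrac{2r-m}{2}$ integral, and padding with a part $3$ when $m$ is odd); that route automatically produces the exponent $r+\tfrac{m}{2}-u$ and the factor $(y\log y)^{-m/2}$ needed to close the argument. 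Your more elementary route can be salvaged the same way once you insert $u-r\leq m/2$.
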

\begin{proof}
First, we assume that $m$ is even. 
Then since $r_{m+1},\dots,r_u \geq 2$, and $r_{m+1}+\cdots+r_u=2r-m$, by (\ref{composition-inequality}),
\begin{eqnarray*}
\frac{\left( \frac{2r-m}{2}\right)!}{(u-m)!r_{m+1}!\dots r_u!} \leq
\left( \frac{y}{\log y} \right)^{(r-m/2)-(u-m)}\leq \left( \frac{y}{\log y}  \right)^{r+m/2-u} 
\end{eqnarray*}
Hence
$$
\frac{1}{r_{m+1}!\dots r_u!} \leq \frac{(u-m)!}{(r-m/2)!}\left( \frac{y}{\log y}  \right)^{r+m/2-u}. 
$$
So
\begin{eqnarray*}
\frac{1}{u!}\cdot \frac{1}{r_1!r_2!\dots r_m! r_{m+1}! \dots r_u!} \frac{y^u}{y^{m+r}}\frac{(\log y)^r}{(\log y)^u} &\leq& \frac{(u-m)!}{u!} \frac{1}{(r-m/2)!}\left( \frac{y}{\log y}  \right)^{r+m/2-u}\frac{y^u}{y^{m+r}}\frac{(\log y)^r}{(\log y)^u} \\
&\leq & \frac{(u-m)!}{u!} \frac{1}{(r-m/2)!} \frac{1}{(y \log y)^{m/2}}
\end{eqnarray*}

Since $r < y$ and $\frac{(u-m)!}{u!} <1$, 
$$
\frac{r!}{(r-\frac{m}{2})!} \frac{(u-m)!}{u!} \leq (y \log y)^{m/2}.
$$
This implies
$$
 \frac{(u-m)!}{u!} \frac{1}{(r-m/2)!} \frac{1}{(y \log y)^{m/2}} \leq \frac{1}{r!}.
$$
Hence we have $(\ref{composition-ineq2})$. 

When $m$ is odd, we consider a composition of $2r-m+3$ of the form:
$$
r'_{m+1}=r_{m+1}, r'_{m+2}=r_{m+2}, \dots, r'_u=r_u, \mbox{ and } r'_{u+1}=3.  
$$

With this composition, by (\ref{composition-inequality}),
\begin{eqnarray*}
\frac{\left( \frac{2r-m+3}{2}\right)!}{(u-m+1)!r_{m+1}!\dots r_{u}!3! }=\frac{\left( \frac{2r-m+3}{2}\right)!}{(u-m+1)!r'_{m+1}!\dots r'_u!r_{u+1}'! } \leq
\left( \frac{y}{\log y} \right)^{r+m/2+1/2-u}.
\end{eqnarray*}

As we did for the case of even $m$, since $ r < y$ and $\frac{(u-m+1)!}{u!} \leq 1$,
we have
$$
\frac{r!}{(r-\frac{m-3}{2})!} \frac{(u-m+1)!}{u!} \leq \frac 16(y \log y)^{\frac {m-1}2}\log y.
$$
This implies $(\ref{composition-ineq2})$. 
\end{proof}

Recall that we are treating a composition $r_1+r_2+\cdots +r_u=2r$ with $r_1=r_2=\cdots=r_m=1$. Let $N$ be the number of conjugacy classes of $G$, and partition the sum $\sum_{\rho\in L(X)}$ into $(N+w)^{u}$ sums, namely, given
$(\mathcal S_1,...,\mathcal S_{u})$, where $\mathcal S_i$ is either $\mathcal S_{p_i,C}$ or $\mathcal S_{p_i,r_j}$,
we consider the set of $\rho\in L(X)$ with the local conditions $\mathcal S_i$ for each $i$. Note that in each such partition, $a_{\rho}(p_1)^{r_1}\cdots a_{\rho}(p_u)^{r_u}$ remains a constant.

Suppose $p_1$ is unramified, and fix the splitting types of $p_2,\cdots,p_u$, and let $\text{Frob}_{p_1}$ runs through the conjugacy classes of $G$. Then by (\ref{estimate}), the sum of such $N$ partitions is
\begin{equation}\label{P}
\sum_C \left(\frac{|C|a_\rho(p_1)}{|G|(1+f(p_1))} A(\mathcal S_2,...,\mathcal S_{u})X + O((p_1\cdots p_u)^\kappa X^\delta) \right),
\end{equation}
for a constant $A(\mathcal S_2,...,\mathcal S_u)$.
Let $\chi_\rho$ be the character of $\rho$. Then $a_{\rho}(p)=\chi_{\rho}(g)$, where $g=\text{Frob}_p$. By orthogonality of characters,
$\sum_C |C| a_{\rho}(p_1)=\sum_{g\in G} \chi_\rho(g)=0$. Hence the above sum is
$O((p_1\cdots p_u)^\kappa X^\delta)$. The contribution from these $N$ partitions to $(\ref{composition-bound})$ is, 
\begin{eqnarray*}
&& \ll X^\delta \frac {(2r)!}{r_1!\cdots r_u!} \frac 1{u!}  {\sum}_{p_1,...,p_u}^{(2)} p_1^{\kappa-1}\cdots p_m^{\kappa-1} p_{m+1}^{\kappa-r_{m+1}}\cdots p_u^{\kappa-r_{u}} \\
&& \ll X^\delta \frac {(2r)!}{r_1!\cdots r_u!} \frac 1{u!} \prod_{i=1}^m \left(\sum_{y<p_i<x} p_i^{\kappa-1}\right) \prod_{i=m+1}^u \left(\sum_{y<p_i<x} p_i^{k-r_i}\right) \\
&& \ll  2^u X^\delta \frac {(2r)!}{r_1!\cdots r_u!}\frac{1}{u!} \frac{ x^{u \kappa }}{ (\log x)^u} \ll 2^u X^{\delta} \frac{(2r)!}{r!} \frac{ x^{u \kappa }}{ (\log x)^u} y^{m+r-u}(\log y)^{u-r}\ll 2^u X^{\delta} \frac{(2r)!}{r!} (\log X)^{ u \kappa \beta + r}. 
\end{eqnarray*}
Here we used Lemma \ref{tech lemma} for the second last inequality. 

Hence the contribution from the cases when $p_j$ is unramified for some $j \leq m$, is
\begin{eqnarray*}
 &&\ll (N+w)^{u} 2^{u} X^{\delta} \frac {(2r)!}{r!}(\log X)^{u \kappa \beta + r} 
\ll (N+w)^{2r} 2^{2r} X^{\delta} \frac {(2r)!}{r!}(\log X)^{2r(\kappa \beta + 1)}.
\end{eqnarray*}

If we choose $c_2$ sufficiently small, for example, taking $c_2=\frac{1-\delta}{20(\kappa \beta+1)}$,
$$
(N+w)^{2r} 2^{2r} X^{\delta} \frac {(2r)!}{r!}(\log X)^{2 r (\kappa \beta+1) }
\ll d^{2r} X \frac {(2r)!}{r!} \frac {2^{2r}}{(y\log y)^r}.
$$
Hence we verified $(\ref{composition-bound})$.

Now, we assume that $p_1,p_2,\cdots,p_m$ are all ramified. Then by (\ref{estimate}), the number of elements in the set of $\rho\in L(X)$ with the local condition $\mathcal S_{p_i,r}$ for $i=1,\dots,m$, is
$$
\prod_{i=1}^m \frac {f(p_i)}{1+f(p_i)} A(r_2)X + O( (p_1\cdots p_m)^\kappa X^{\delta}),
$$

Since $\frac {f(p)}{1+f(p)}\ll \frac 1p$, by the trivial bound, the main term contributes to (\ref{composition-bound})
\begin{eqnarray*}
&& X d^{2r} {\sum}_{p_1,...,p_u}^{(2)} \frac 1{p_1^{2}\cdots p_m^{2} p_{m+1}^{r_{m+1}}\cdots p_u^{r_u}} 
\ll X d^{2r} \prod_{i=1}^m \left(\sum_{y<p_i<x} p_i^{-2}\right) \prod_{i=m+1}^u \left(\sum_{y<p_i<x} p_i^{-r_i}\right) \\
&& \ll X d^{2r} 2^{2r} (y\log y)^{-r} \frac {y^u}{y^{m+r}}\cdot \frac {(\log y)^r}{(\log y)^u}.
\end{eqnarray*}
By Lemma \ref{tech lemma}, (\ref{composition-bound}) is verified.

The contribution of the error term $O((p_1\cdots p_m)^\kappa X^{\delta})$ is the same as when $p_1$ is unramified.
\end{proof}

Now take $y=c_1\log X$, and $r=c_2\frac {\log x}{\log\log X}$. Then from Proposition \ref{MP}, the number of $\rho\in L(X)$ such that 
$\left|\sum_{y<p<x} \frac {a_{\rho}(p)}p\right|>\frac 1{(\log\log X)^{1/2}}$, is
\begin{equation}\label{error}
\ll X e^{-c' \frac {\log X}{\log\log X}\log\log\log X},
\end{equation}
for some $c'>0$. This proves Proposition \ref{Lam}.

\subsection{Infinite family of number fields with extreme residues}

Let $C$ be a conjugacy class of $S_{d+1}$, and $\mathcal S=(S_{p,C})_{p\leq y}$ be the set of local conditions  
such that for every prime\ $p\leq y$, $Frob_p\in C$. 
We denote $L(X,\mathcal S)^{r_2}$ by $L(X,\mathcal S)$.
Conjecture \ref{counting} says that
\begin{eqnarray*}
|L(X, \mathcal S)|= A(r_2)X \prod_{p\leq y} \frac {\frac {|C|}{|S_{d+1}|}}{1+f(p)} +O\left(\Bigg(\prod_{p \leq y}p\Bigg)^\gamma X^\delta\right).
\end{eqnarray*}

The main term is 
\begin{equation}\label{main-term}
A(r_2) \frac X{\log y} \exp\left(-\log \frac {|S_{d+1}|}{|C|} \cdot \frac {\log X}{\log\log X}\right).
\end{equation}
This is larger than (\ref{error}). Also we may assume that almost all $L$-functions in $L(X, \mathcal S)$ have the desired zero-free region of the form in 
Proposition \ref{App2}. Hence, by Proposition \ref{Lam}, except $O(X e^{-c' \frac {\log X}{\log\log X}\log\log\log X})$ fields,
$$L(1,\rho)=\prod_{p\leq y\atop Frob_p\in C} \prod_{i=1}^d (1-\alpha_i p^{-1})^{-1} \left(1+O\left(\frac 1{(\log\log |D_K|^{\frac 12}}\right)\right).
$$

By taking $C=1$, we obtain an infinite family of number fields with the upper bound. On the other hand, by taking $C=(1,\cdots,d+1)$, we obtain an infinite family of number fields with the lower bound. This proves Theorem \ref{main}.

In a similar way, for each $0\leq i\leq d$, $d-i$ even, we can construct an infinite family of number fields with the residue 
$$\zeta(2)^{\frac {d-i}2} e^{\gamma i}(\log\log |D_K|)^i (1+o(1)).
$$
In particular we obtain an infinite family of number fields with bounded residues by taking 
$$C=\begin{cases} (1,2)(3,4)\cdots (d-1, d), &\text{if $d$ is even}\\ (1,2)(3,4)\cdots (d-4, d-3)(d-2, d-1, d), &\text{if $d$ is odd}\end{cases}.
$$
for which 
\begin{eqnarray*}
Res_{s=1}\zeta_K(s)=L(1,\rho)= \begin{cases} \zeta(2)^{\frac d2}(1+o(1)), &\text{if $d$ is even}\\ \zeta(2)^{\frac {d-3}2}\zeta(3)(1+o(1)), &\text{if $d\geq 3$ is odd}.\end{cases},
\end{eqnarray*}
and it proves Theorem \ref{bounded value}.

\end{document}